\newtheorem{theorem}{Theorem}[section]
\newenvironment{introtheorem}[1]
  {\intro}
  {\endintro}
\newtheorem{corollary}[theorem]{Corollary}
\newtheorem*{corollary*}{Corollary}
\newtheorem{lemma}[theorem]{Lemma}
\theoremstyle{remark}
\newtheorem*{remark}{Remark}
\newcommand{\tO}{\mathtt 0}       
\newcommand{\tL}{\mathtt 1}       
\begin{document}
\title[Discrepancy of the Van der Corput sequence]%
{Discrepancy results for the Van der Corput sequence}
\author{Lukas Spiegelhofer}
\address{Institut f\"ur diskrete Mathematik und Geometrie,
Technische Universit\"at Wien,
Wiedner Hauptstrasse 8--10, 1040 Wien, Austria}
\thanks{The author acknowledges support by 
projects F5502-N26 and F5505-N26 (FWF),
which are part of the Special Research Program
``Quasi Monte Carlo Methods: Theory and Applications''.
Moreover, the author thanks the Erwin Schr\"odinger Institute for Mathematics and Physics where part of this paper was written during his visit for the
programme ``Tractability of High Dimensional Problems and Discrepancy''.
}
\keywords{Van der Corput sequence, irregularities of distribution, digit reversal}
\subjclass[2010]{Primary: 11K38, 11A63; Secondary: 11K31}


\begin{abstract}
Let $d_N=ND_N(\omega)$ be the discrepancy of the Van der Corput sequence in base $2$.
We improve on the known bounds for the number of indices $N$ such that $d_N\leq \log N/100$.
Moreover, we show that the summatory function of $d_N$ satisfies an exact formula involving a $1$-periodic, continuous function.
Finally, we show that $d_N$ is invariant under digit reversal in base $2$.
\end{abstract}
\maketitle
\section{Introduction}
Every nonnegative integer $n$ admits a unique expansion $n=\sum_{i=0}^\nu \varepsilon_i 2^i$ such that $\nu=0$ or $\varepsilon_\nu\neq 0$.
We let $\varepsilon_i(n)$ denote the $i$-th digit in base $2$.
The Van der Corput sequence is defined via the \emph{radical inverse} of $n$ in base $2$: define $\omega_n=\sum_{i=0^\nu}\varepsilon_i(n)2^{-i-1}$.

Let $x=(x_n)_{n\geq 0}$ be a sequence in 
$[0,1)$.                                 
The \emph{discrepancy} $D_N(x)$ of $x$ is defined by
\[D_N(x)=\sup_{0\leq a\leq b\leq 1}\bigl \lvert
A_N(x,a,b)/N-b \bigr \rvert\]
for $N\geq 1$, where $A_N(x,a,b)=\left\lvert\{n<N:a\leq x_i<b\}\right\rvert$.
Moreover, we set $D_0(x)=0$.
The \emph{star-discrepancy} (or discrepancy at the origin) of a sequence $x$ in   
$[0,1)$                                                                           
is defined by
$D^*_N(x)=\sup_{0\leq b\leq 1}\lvert A_N(x,0,b)/N-N \rvert,$
for $N\geq 1$, and we set $D^*_0(x)=0$.

In this paper, we are concerned with the discrepancy of the Van der Corput sequence. We define 
\[d_N=ND_N(\omega),\]
and we will use this notation throughout this paper.
It is well known~\cite[Th\'eor\`eme 1]{BF1978} that the star discrepancy of the Van der Corput sequence equals its discrepancy: we have $D^*_N(\omega)=D_N(\omega)$.
The Van der Corput sequence is a \emph{low discrepancy sequence}, that is, we have $d_N\ll \log N$. More precise results are known:
B\'ejian and Faure~\cite{BF1978} proved the following theorem.
\begin{introtheorem}{A}
\[d_N\leq \frac 13\log_2 N+1\]
for all $N\geq 1$; moreover
\[\limsup_{N\rightarrow\infty} \left(d_N-\frac 13\log_2 N\right)=\frac 49+\frac 13\log_2 3,\]
where $\log_2$ denotes the logarithm in base $2$.
\end{introtheorem}
In the proof of these statements, they implicitely show that $d_N$ is bounded above by the polygonal path connecting the first maxima on the intervals $I_k=[2^{k-1},2^k]$, given by the points $\bigl(\frac 13\left(2^{k+1}+(-1)^k\right),\frac k3+\frac 79+(-1)^k/(9\cdot 2^{k-1})\bigr)$.
This should be compared to the argument given by Coons and Tyler~\cite{CT2014} concerning Stern's diatomic sequence (also called Stern--Brocot sequence), see also the paper by Coons and Spiegelhofer~\cite{CS2017} and the recent paper by Coons~\cite{C2017}.

Concerning the ``usual'' order of magnitude of the discrepancy of the Van der Corput sequence, Drmota, Larcher and Pillichshammer~\cite[Theorem~2]{DLP2005} proved a central limit theorem for $d_N$.
\begin{introtheorem}{B}
For every real $y$, we have
\begin{equation}\label{eqn_DLP}
\frac 1M\left\lvert\left\{
N<M:d_N\leq \frac 14\log_2 N+y\frac 1{4\sqrt{3}}\sqrt{\log_2 N}
\right\}\right\rvert
=\Phi(y)+o(1)
,\end{equation}
where $\Phi(y)=\frac 1{\sqrt{2\pi}}\int_{-\infty}^y e^{-t^2/2}\,\mathrm dt$.
\end{introtheorem}

We note that this implies in particular that $d_N$ is usually of order $\log N$.
More precisely, letting $A_{M,y}$ denote the expression on the left hand side of~\eqref{eqn_DLP},
we trivially have $A_{M,y'}\leq A_{M,y}$ if $y'\leq y$.
This implies, for any sequence $(y_M)_{M\geq 1}$ of reals such that $y_M\rightarrow -\infty$ for $m\rightarrow\infty$, that
\[\lim_{M\rightarrow\infty}A_{M,y(M)}\leq \lim_{M\rightarrow\infty}A_{M,y}=\Phi(y)\]
for all real $y$, therefore this limit equals $0$.
In particular, if $\delta<1/4$, the number of integers $N<M$ such that $d_N\leq \delta\log N$ is $o(M)$.

Bounds of this type, with an explicit error term, had been proved earlier: S\'os~\cite{S1983} proved such a statement for $\{n\alpha\}$-sequences, more generally Tijdeman and Wagner~\cite{TW1980} showed that any sequence in
$[0,1)$
has almost nowhere small discrepancy. More specifically, they proved the following theorem.

\begin{introtheorem}{C}
Let $\xi$ be a sequence in    
$[0,1)$.                      
Let $M$ and $N$ be integers with $M\geq 0$ and $N>1$.
Then $D_n(\xi)<\log N/100$ for at most $2 N^{5/6}$ integers $n$ with
$M<n\leq M+N$.
\end{introtheorem}

In fact, it follows from Lemma~2 in their paper~\cite{TW1980} that the exponent $5/6$ can be replaced by an arbitrarily small positive value if we demand an arbitrarily small constant in place of $1/100$.

\begin{corollary*}
Let $\xi$ be a sequence in    
$[0,1)$.                      
For each $\varepsilon>0$ there exists a constant $\delta>0$ such that for all integers $M\geq 0$ and $N>1$
we have $D_n(\xi)<\delta\log N$ for at most $2N^{\varepsilon}$ integers $n$ with
$M<n\leq M+N$.
\end{corollary*}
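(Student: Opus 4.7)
Proof plan.

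The corollary is essentially a restatement of the parametric bound that Tijdeman and Wagner actually establish in \cite{TW1980}; Theorem~C above is only the convenient specialisation $\varepsilon = 5/6$, $\delta = 1/100$. My plan is therefore to revisit Lemma~2 of \cite{TW1980} and verify the claim made in the preceding paragraph: that the exponent and the threshold can be driven simultaneously to zero while keeping the multiplicative constant $2$ intact.

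Tijdeman and Wagner's Lemma~2 controls the size of the exceptional set by partitioning $(M,M+N]$ into dyadic sub-blocks and combining a combinatorial estimate on each sub-block; the key quantitative input is the threshold $\vartheta$ which replaces $1/100$ in Theorem~C. The dependence of the resulting exponent on $\vartheta$ is monotone, in the sense that shrinking $\vartheta$ yields a smaller exponent. Given $\varepsilon > 0$, I would trace through the proof of Lemma~2 with $\vartheta$ kept as a free parameter and determine the threshold $\delta = \delta(\varepsilon) > 0$ at which the final exponent drops to at most~$\varepsilon$. Provided the constant $2$ can be preserved in this process, the corollary follows immediately.

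The main obstacle is ensuring that the multiplicative constant $2$ can indeed be preserved as $\vartheta$ shrinks. If instead it inflates to some $C(\varepsilon)$, one applies the parametric bound with $\varepsilon/2$ in place of $\varepsilon$ and uses $C(\varepsilon/2)\,N^{\varepsilon/2} \leq 2 N^\varepsilon$ for $N$ above an $\varepsilon$-dependent threshold $N_0$, handling the finitely many remaining values $1<N<N_0$ by further shrinking $\delta$: using $\delta\log N\leq \delta\log N_0$, one embeds the condition $D_n(\xi)<\delta\log N$ into the condition $D_n(\xi)<\delta\log N_0$ on the enlarged window $(M,M+\lceil N_0\rceil]$, and chooses $\delta$ small enough that the parametric bound applied there gives a count of at most~$1$. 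The bulk of the work is thus the careful re-derivation of the estimates in the proof of Lemma~2 of \cite{TW1980} with the threshold $\vartheta$ kept explicit throughout.
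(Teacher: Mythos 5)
Your overall strategy coincides with the paper's: the corollary is not proved in the paper at all but simply attributed to Lemma~2 of Tijdeman and Wagner, which is exactly the route you propose, and your device for absorbing an inflated constant $C(\varepsilon/2)$ into $2N^{\varepsilon}$ once $N\geq N_0(\varepsilon)$ is sound.

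The gap is in your treatment of the remaining range $1<N<N_0$. You propose to pass to the enlarged window $(M,M+\lceil N_0\rceil]$ and to ``choose $\delta$ small enough that the parametric bound applied there gives a count of at most $1$''. No choice of $\delta$ can achieve this: the bound you are invoking has the shape $C\cdot N_0^{\eta}$ with a multiplicative constant $C\geq 2$ and an exponent $\eta>0$, and this quantity never drops below $1$, however small the threshold is taken --- shrinking $\delta$ shrinks the exponent, not the constant. The repair is elementary but different: since $N\geq 2$ one always has $2N^{\varepsilon}\geq 2$, so it suffices to show that at most two $n$ in the window qualify; in fact none do, because the unnormalized discrepancy satisfies $nD_n(\xi)\geq 1/2$ for every $n\geq 1$, so taking $\delta<1/(2\log N_0)$ makes the condition $nD_n(\xi)<\delta\log N$ vacuous for all $N<N_0$. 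Note that this fix, and indeed the corollary itself, only make sense for the \emph{unnormalized} discrepancy $nD_n(\xi)$ (Tijdeman and Wagner's normalization, and the quantity $d_n$ appearing in Theorems~\ref{thm_strong_irregularity} and~\ref{thm_strong_irregularity2}); read literally with the normalized $D_n$ defined in the introduction, the statement is false: for the Van der Corput sequence, $N=3$ and $M$ large, all three integers $n\in(M,M+3]$ satisfy $D_n(\omega)<\delta\log 3$, which exceeds $2\cdot 3^{\varepsilon}$ for small $\varepsilon$.
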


We proceed to the statement of our results.
\section{Results}
We wish to show that the constant $5/6$ in Theorem~C can be improved at least for the Van der Corput sequence.

\begin{theorem}\label{thm_strong_irregularity}
For all large $N$, the number of $n<N$ satisfying $d_N\leq\log N/100$ is bounded above by $N^{0.183}$.
\end{theorem}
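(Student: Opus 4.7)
My plan is to exploit the fact that the Van der Corput discrepancy $d_n$ in base $2$ admits an exact expression as an additive functional of the binary expansion of $n$. Writing $n = \sum_{i=0}^{L-1}\varepsilon_i 2^i$ with $L = \lfloor\log_2 n\rfloor + 1$, such a description should take the following shape (and is in fact the structural input underlying the central limit theorem in Theorem~B): there exists a finite set of states $\mathcal{S}$, a transition rule $\tau\colon\mathcal{S}\times\{0,1\}\to\mathcal{S}$, and a nonnegative cost function $\phi\colon\mathcal{S}\times\{0,1\}\to\mathbb{Z}_{\geq 0}$, such that scanning the digits $\varepsilon_{L-1},\varepsilon_{L-2},\ldots,\varepsilon_0$ produces a trajectory $s_0 \to s_1 \to \cdots \to s_L$ in $\mathcal{S}$ with
\[
d_n = \sum_{i=0}^{L-1}\phi(s_i,\varepsilon_{L-1-i}).
\]
The first step of the proof is therefore to record such a representation for the Van der Corput sequence in base $2$.

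With this structure in place, the counting problem turns into a standard large-deviations question for an additive functional of a finite-state automaton, which I would attack by the transfer-matrix (thermodynamic) method. Fix a parameter $\lambda\in(0,1]$ and introduce the weighted transfer matrix $M(\lambda)$, indexed by $\mathcal{S}$, with entries
\[
M(\lambda)_{s,s'} = \sum_{\varepsilon\in\{0,1\}\,:\,\tau(s,\varepsilon)=s'}\lambda^{\phi(s,\varepsilon)}.
\]
A Markov-type inequality then gives, for any $\delta>0$,
\[
\#\bigl\{n<2^L : d_n\leq\delta L\bigr\} \leq \lambda^{-\delta L}\sum_{n<2^L}\lambda^{d_n} \ll \lambda^{-\delta L}\rho(\lambda)^{L},
\]
where $\rho(\lambda)$ is the Perron eigenvalue of $M(\lambda)$. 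Taking $\delta=(\log 2)/100$, which is the threshold $\log N/100$ converted to base $2$, and optimizing over $\lambda$ yields a bound of the shape $N^{\alpha}$ with
\[
\alpha = \inf_{0<\lambda\leq 1}\bigl(\log_2\rho(\lambda) - \delta\log_2\lambda\bigr).
\]

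The main obstacle will be twofold: identifying a state space small enough that $\rho(\lambda)$ can be written down explicitly, and certifying the numerical inequality $\alpha<0.183$ with sufficient precision. Given that the Van der Corput sequence in base $2$ has a tight block structure already exploited in Theorems~A and~B, I expect the relevant automaton to have only a handful of states, so that $\rho(\lambda)$ is an algebraic function of $\lambda$ of low degree and the certification reduces to a concrete one-variable optimization. Finally, the passage from $N=2^L$ to general $N$ only costs constants, since for $2^{L-1}\leq N<2^L$ the threshold $\log N/100$ and the quantity $(\log 2)L/100$ differ by at most a bounded additive amount, which is absorbed by the factor $N^{0.183}$ for large $N$.
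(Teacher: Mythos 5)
Your overall strategy (replace the counting problem by a Chernoff/transfer-matrix bound for a digit functional, then optimize the tilting parameter) is sound in spirit, but the first step, which you yourself flag as ``the structural input,'' is not available in the form you assume, and this is precisely where the real work of the theorem lies. The quantity $d_n$ is \emph{not} an additive functional $\sum_i \phi(s_i,\varepsilon_{L-1-i})$ over a finite automaton with a fixed finite set of cost values: such a sum always lies in the additive semigroup generated by the finitely many values of $\phi$, hence has bounded denominator, whereas $d_n$ has unboundedly fine denominators (e.g.\ $d_{17}=31/16$, $d_{33}=63/32$, and in general $d_{2^k+1}=2-2^{1-k}$). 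Your stated codomain $\phi\colon\mathcal S\times\{0,1\}\to\mathbb Z_{\ge 0}$ fails even more immediately, since $d_3=3/2$. What $d_n$ does admit is a \emph{linear} representation (a product of $3\times 3$ matrices, as used in the paper's proof of the digit-reversal theorem); but for a matrix-product representation the key inequality $\sum_{n<2^L}\lambda^{d_n}\ll\rho(\lambda)^L$ does not follow, because $\lambda^{d_n}$ is not multiplicative along the matrix product. So as written the argument has a genuine gap: the automaton you need does not exist for $d_n$ itself.

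The missing idea, which is the heart of the paper's proof, is a two-sided comparison of $d_N$ with a quantity that \emph{is} a finite-state digit statistic: the paper shows $\tfrac12\lvert N\rvert_{\tO\tL}\le d_N\le 2\lvert N\rvert_{\tO\tL}$, where $\lvert N\rvert_{\tO\tL}$ counts occurrences of the block $\tO\tL$ (equivalently, maximal blocks of $\tL$s) in the binary expansion, via the explicit formula $d_N=\sum_{j\ge 1}\lVert N/2^j\rVert$. Once you have such a comparison, your plan goes through with $\lvert N\rvert_{\tO\tL}$ in place of $d_n$: the relevant automaton has two states (last digit read), the transfer matrix is $2\times2$, and the resulting rate function is the same binary entropy $H(\beta)/\log 2$ with $\beta=4\log 2/100$ that the paper obtains by directly counting $\lvert\{N\in[2^k,2^{k+1}):\lvert N\rvert_{\tO\tL}=\ell\}\rvert=\binom{k+1}{2\ell-1}$ and estimating binomial coefficients; both routes certify the exponent $<0.183$. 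So your transfer-matrix formulation is a legitimate alternative to the paper's explicit combinatorics for the \emph{second} half of the argument, but the proposal cannot be completed without supplying the comparison lemma (or some substitute relating $d_N$ to a genuine finite-state functional), and that lemma requires the analytic input $d_N=\sum_{j\ge1}\lVert N/2^j\rVert$, which your sketch never touches.
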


Moreover, Tijdeman and Wagner~\cite[Theorem~3]{TW1980}
showed that for infinitely many $N$ we have $d_n\leq \log N/100$ for more than $N^{1/21}$ integers $n\in[1,N]$.
We wish to improve on the exponent $1/21$.
\begin{theorem}\label{thm_strong_irregularity2}
For all large $N$, the number of $n<N$ satisfying $d_N\leq \log N/100$ is bounded below by $N^{0.056}$.
\end{theorem}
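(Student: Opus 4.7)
My plan is to explicitly construct a large family of integers $n<N$ with small discrepancy by prescribing their binary digits. The starting point is a representation of $d_n$ as a sum of contributions depending on the binary digits of $n$, of the same kind that underlies the central limit theorem stated in Theorem~B. Writing $n=\sum_{i\geq 0}\varepsilon_i 2^i$, one has an expression of the form
\[d_n=\sum_{i=0}^{\lfloor\log_2 n\rfloor}\psi_i(n),\]
where each summand $\psi_i(n)$ is a bounded function of the digits $\varepsilon_j$ for $j$ in a short window around $i$.

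With this in hand, I would identify a finite set $\mathcal{B}$ of binary blocks of a common length $\ell$ with the property that, whenever the binary expansion of $n$ is a concatenation of blocks from $\mathcal{B}$, each block contributes at most a fixed constant $c$ to $d_n$. Setting $L=\lfloor\log_2 N\rfloor$, the integers $n\in[N/2,N)$ obtained by concatenating $\lfloor L/\ell\rfloor$ such blocks (with a bounded padding block at the top to land in the correct dyadic interval) then number at least $|\mathcal{B}|^{L/\ell-O(1)}$, and all satisfy $d_n\leq c\cdot L/\ell+O(1)$. To match the target exponent $0.056$ and the coefficient $1/100$, the parameters must satisfy
\[\frac{\log_2|\mathcal{B}|}{\ell}\geq 0.056\qquad\text{and}\qquad \frac{c}{\ell}\leq \frac{1}{100\log 2}.\]
The requirement "for all large $N$", rather than merely "for infinitely many $N$" as in Tijdeman--Wagner, is met precisely because the padding block can be chosen for each $N$ without damaging either inequality.

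The main obstacle, and the heart of the argument, is producing an explicit block family $\mathcal{B}$ realizing both inequalities simultaneously. This is a finite combinatorial optimization on short binary words: enumerate candidate blocks of moderate length $\ell$, evaluate their contribution to $d_n$ via the digit formula above (including corrections at block boundaries, since the digit windows of $\psi_i$ straddle two adjacent blocks), and select a maximal sub-family whose concatenations obey the per-block bound uniformly. The former exponent $1/21$ of Tijdeman--Wagner comes from a sequence-agnostic construction valid for any $[0,1)$-valued sequence; the improvement to $0.056$ should come from exploiting the fine, local digit structure specific to the Van der Corput sequence, which allows a considerably richer block alphabet $\mathcal{B}$ at a given discrepancy cost $c$.
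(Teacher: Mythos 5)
Your overall strategy --- lower-bounding the count by constructing integers whose binary digit patterns force $d_n$ to be small --- is sound and is in fact the same idea the paper uses. But as written the proposal has a genuine gap: the decisive step, namely the existence of a block family $\mathcal B$ of length $\ell$ satisfying \emph{simultaneously} $\log_2\lvert\mathcal B\rvert/\ell\geq 0.056$ and $c/\ell\leq 1/(100\log 2)$, is precisely the quantitative content of the theorem, and you defer it entirely to an unexecuted ``finite combinatorial optimization''. Nothing in the proposal certifies that such a family exists, so the exponent $0.056$ is never actually derived. A secondary problem is the claimed representation $d_n=\sum_i\psi_i(n)$ with each $\psi_i$ depending only on a short window of digits around position $i$: the exact formula is $d_n=\sum_{j\geq 1}\lVert n/2^j\rVert$, and the term $\lVert n/2^j\rVert$ depends on \emph{all} digits of $n$ below position $j$. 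One can truncate to a window only up to error terms, and these (together with your block-boundary corrections) must be controlled before a per-block cost $c$ even makes sense; without that, the trivial bound $\lVert n/2^j\rVert\leq 1/2$ only gives $d_n\leq\tfrac12\log_2 n$, which is useless here.

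The paper supplies exactly the missing ingredient in closed form: the two-sided estimate $\tfrac 12\lvert N\rvert_{\tO\tL}\leq d_N\leq 2\lvert N\rvert_{\tO\tL}$, where $\lvert N\rvert_{\tO\tL}$ counts occurrences of $\tO\tL$ (equivalently, maximal blocks of $\tL$s) in the binary expansion; it is proved by grouping the terms $\lVert N/2^j\rVert$ along the blocks of $\tL$s and bounding each group by a geometric series. The upper bound shows that every $N\in[2^k,2^{k+1})$ with at most $(k\log 2)/200$ blocks of $\tL$s has $d_N\leq\log N/100$, and the number of such $N$ is counted \emph{exactly} as $\sum_{\ell}\binom{k+1}{2\ell-1}$; a single binomial coefficient plus a Stirling-type estimate then gives the exponent $\log_2\bigl(\beta^{-\beta}(1-\beta)^{-(1-\beta)}\bigr)>0.056$ with $\beta=(\log 2)/100$. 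You could salvage your version by taking $\mathcal B$ to be all length-$\ell$ words with few $\tL$-blocks, but then you are re-deriving the paper's lemma, and the exact binomial count is both simpler and sharper than a concatenation construction with padding. To complete your proof you must either exhibit $\mathcal B$ and verify both numerical inequalities, or prove a digit-based upper bound on $d_n$ of the above type.
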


It would be interesting to determine, for each given $\varepsilon>0$, the exact ``exponent of strong irregularity'' of the Van der Corput sequence.
That is, determine the infimum of $\eta$ such that the number of $n<N$ satisfying $d_N\leq \varepsilon \log N$ is bounded by $N^\eta$, for all large $N$.
By the above results this infimum, for $\varepsilon=1/100$, lies in $[0.056,0.183]$.
We leave this as an open question.
Next, we consider partial sums
\[S(N)=d_1+\cdots+d_N.\]

It was shown by B\'ejian and Faure~\cite{BF1978} that
\[\frac 1N\sum_{k=1}^Nd_k=\frac{\log_2N}{4}+O(1),\]
where $\log_2 N$ denotes the base-$2$ logarithm of $N$.
We are interested in the error term appearing in this expression.
It turns out that there exist an exact formula involving a $1$-periodic, continuous function (see, for example, the papers by Delange~\cite{D1975} and Flajolet et al.~\cite{FGKPT1994}).

\begin{theorem}\label{thm_one_periodic}
There exists a continuous, $1$-periodic function $\psi:\mathbb R\rightarrow \mathbb R$ such that
\begin{equation}\label{eqn_one_periodic}
  \frac 1NS(N)=\frac{\log_2N}{4}+\frac {d_N}{2N}+\psi(\log_2 N).
\end{equation}
The function $\psi$ is uniquely determined.
\end{theorem}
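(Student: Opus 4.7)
The plan follows the Delange--Flajolet strategy for extracting continuous periodic fluctuations from self-similar recursions. The key input is the pair of recursions $d_{2N}=d_N$ and $d_{2N+1}=(d_N+d_{N+1})/2+1/2$. The first follows from the self-similarity $\omega_{2n}=\omega_n/2$, $\omega_{2n+1}=(1+\omega_n)/2$: on $[0,1/2)$ the first $2N$ points are rescalings of the first $N$, giving $\lvert A_{2N}(\omega,0,b)/(2N)-b\rvert=\lvert A_N(\omega,0,2b)/N-2b\rvert/2$ for $b\leq 1/2$ (and symmetrically for $b\geq 1/2$), whence $D_{2N}^*=D_N^*/2$. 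The second requires a direct analysis of the star-discrepancy supremum for $2N+1$ points, noting that the new point $\omega_{2N}=\omega_N/2$ lies in the lower half. Summing these yields the companion identities
\[S(2N)=2S(N)+\frac{N}{2}-\frac{d_N}{2},\qquad S(2N+1)=2S(N)+\frac{N+1}{2}+\frac{d_{N+1}}{2}.\]

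Setting $T(N):=S(N)-N\log_2 N/4-d_N/2$, the theorem amounts to $T(N)/N=\psi(\log_2 N)$ for a continuous $1$-periodic $\psi$. Direct substitution of the even recursion together with $d_{2N}=d_N$ collapses to the exact doubling $T(2N)=2T(N)$, so $T(N)/N$ is constant along each orbit $\{2^kN\}_{k\geq 0}$; setting $\psi(\log_2 N\bmod 1):=T(N)/N$ defines $\psi$ consistently on the dense subset $\{\log_2 N\bmod 1:N\geq 1\}$ of $\mathbb R/\mathbb Z$.

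The main work is the continuous extension. Substituting the odd recursion and expanding $\log_2(2N+1)=1+\log_2 N+O(1/N)$ yields, via the bound $d_N=O(\log N)$ of Theorem~A, the estimates $T(2N+1)=2T(N)+O(\log N)$ and $T(2N+1)=2T(N+1)+O(\log N)$; dividing by $2N+1$ gives $\lvert T(N+1)/(N+1)-T(N)/N\rvert=O(\log N/N)$. Let $\psi_k\colon[0,1]\to\mathbb R$ be the piecewise-linear interpolant of the values $T(N)/N$ at the nodes $\log_2(N/2^k)$ for $N\in\{2^k,\ldots,2^{k+1}\}$. The exact doubling forces $\psi_{k+1}=\psi_k$ at every old node, while at each new node $\log_2((2N+1)/2^{k+1})$ the deviation is $O(k/2^k)$. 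Hence $\lVert\psi_{k+1}-\psi_k\rVert_\infty=O(k/2^k)$, the sequence is uniformly Cauchy, and its limit $\psi$ is continuous with $\psi(0)=\psi(1)$; extending by periodicity gives the required function. Uniqueness is automatic since continuous $1$-periodic functions are determined by their values on a dense set.

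The main obstacle is combining the exact and approximate recursions so that per-step errors telescope summably: the bound $\lvert T(N+1)/(N+1)-T(N)/N\rvert=O(\log N/N)$ applied only between consecutive indices would yield total variation $O(\delta\log N)$ over ranges of size $\delta N$, too weak to imply uniform convergence; one genuinely needs the multiplicative structure, namely $T(2N)=2T(N)$ exactly together with the controlled odd-recursion error, so that the effective cost per halving step is the summable $O(k/2^k)$.
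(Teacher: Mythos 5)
Your proposal is correct and follows essentially the same route as the paper: subtract $d_N/2$ from the partial sum to get a quantity $T(N)=S(N)-N\log_2N/4-d_N/2$ (the paper's $NR(N)$) satisfying the exact doubling $T(2N)=2T(N)$, define $\psi$ on the dense set $\{\log_2 N\}+\mathbb Z$, and obtain continuity from a sequence of scale-$k$ approximants whose successive sup-norm differences are $O(k/2^k)$, using $d_N=O(\log N)$ to control the odd-index recursion. The only (harmless) cosmetic difference is that you use piecewise-linear interpolants and a uniform Cauchy argument where the paper uses step functions $x\mapsto S'(\lfloor x\rfloor)/x-\tfrac14\log_2x$ and nested intervals; your variant slightly streamlines the final continuity step since the approximants are already continuous.
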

In particular, we obtain the boundedness result of the error term given by B\'ejian and Faure.
Our third result is concerned with \emph{digit reversal}:
If $\varepsilon_\nu\cdots \varepsilon_0$ is the proper binary expansion of $n$,
we define $n^R=\sum_{0\leq i\leq \nu}\varepsilon_{\nu-i}2^i$.
Then the following theorem holds.
\begin{theorem}\label{thm_reflection}
Assume that $\alpha,\beta,\gamma$ are complex numbers and that the sequence $x$ satisfies $x_{2n}=x_n$ and $x_{2n+1}=\alpha x_n+\beta x_{n+1}+\gamma$ for $n\geq 1$.
Then for $n\geq 1$ we have
\[x_n=x_{n^R}.\]
\end{theorem}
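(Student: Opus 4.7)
The plan is to reduce the statement to a combinatorial identity about products of $3 \times 3$ matrices. Strong induction on $n$ handles the even case immediately: if $n = 2m$ with $m \geq 1$, then the trailing zero of $n$ becomes a leading zero in $n^R$ and is stripped, so $n^R = m^R$; combined with $x_{2m} = x_m$, this yields $x_n = x_m = x_{m^R} = x_{n^R}$ by the induction hypothesis. So I would reduce to the case of odd $n \geq 3$.

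Next I would encode the recursion by matrix actions on $v_n := (x_n, x_{n+1}, 1)^\top$: the hypotheses give $v_{2n+b} = A_b v_n$ for $b \in \{0,1\}$, where
\[
A_0 = \begin{pmatrix} 1 & 0 & 0 \\ \alpha & \beta & \gamma \\ 0 & 0 & 1 \end{pmatrix}, \qquad A_1 = \begin{pmatrix} \alpha & \beta & \gamma \\ 0 & 1 & 0 \\ 0 & 0 & 1 \end{pmatrix}.
\]
Starting from $v_1 = (x_1,x_1,1)^\top$ (using $x_2 = x_1$), for $n$ with binary expansion $1\,\varepsilon_{\nu-1}\cdots\varepsilon_1\,1$ we get $v_n = A_1 M v_1$ and $v_{n^R} = A_1 M^{\mathrm{rev}} v_1$, where $M := A_{\varepsilon_1} A_{\varepsilon_2} \cdots A_{\varepsilon_{\nu-1}}$ and $M^{\mathrm{rev}}$ is the same product with factors in reversed order. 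Because the first component of $A_1(p,q,1)^\top$ equals $\alpha p + \beta q + \gamma$, the identity $x_n = x_{n^R}$ reduces to
\[
z^\top M v_1 \;=\; z^\top M^{\mathrm{rev}} v_1, \qquad z := (\alpha,\beta,0)^\top.
\]

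To prove this I would write each factor as a rank-one perturbation $A_b = I + \hat v_b \hat w_b^\top$, with $\hat v_0 = e_2$, $\hat v_1 = e_1$ and $\hat w_b$ the nonzero row of $A_b - I$. Two elementary facts are pivotal: $\hat w_b^\top v_1 = (\alpha+\beta-1)x_1 + \gamma$ is the \emph{same} for both $b$, and $\hat w_b^\top \hat v_c = \xi_c - [b=c]$ where $\xi_0 := \beta$, $\xi_1 := \alpha$ (so also $z^\top \hat v_b = \xi_b$). Expanding $Mv_1$ over nonempty subsets $S = \{i_1 < \cdots < i_s\} \subseteq \{1,\dots,\nu-1\}$ of selected rank-one factors and applying $z^\top$, the desired equality reduces, term-by-term over $S$, to the chain identity
\[
\xi_{c_1}\prod_{r=1}^{s-1}\!\bigl(\xi_{c_{r+1}}-[c_r = c_{r+1}]\bigr) \;=\; \xi_{c_s}\prod_{r=1}^{s-1}\!\bigl(\xi_{c_r}-[c_r = c_{r+1}]\bigr), \qquad c_r := \varepsilon_{i_r}.
\]

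The hard part will be this chain identity. I would expand both sides over $T \subseteq \{1,\dots,s-1\}$ indexing which bracket-factors are chosen: both sides become $\sum_T (-1)^{|T|}\bigl(\prod_{j\in T}[c_j = c_{j+1}]\bigr)\prod_{k \in I_T}\xi_{c_k}$, with the index set $I_T$ equal to $\{1,\dots,s\}\setminus(T+1)$ on the LHS and $\{1,\dots,s\}\setminus T$ on the RHS. For a fixed $T$ the contribution vanishes unless every bracket equals $1$, in which case the $c_j$ are constant along each maximal run of consecutive indices of $T$. But then the $\xi$-factors that differ between the two products correspond exactly to the starts of runs ($k \in T \setminus (T+1)$) versus their one-past-ends ($k \in (T+1) \setminus T$), and these carry equal values of $\xi$ by the constancy along each run. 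Hence every $T$-contribution is zero, the chain identity holds, and $x_n = x_{n^R}$ follows.
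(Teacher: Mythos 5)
Your proof is correct, and it takes a genuinely different route from the paper's. The paper uses the same two transition matrices (its $A(\tO),A(\tL)$ are your $A_0,A_1$) and likewise reduces the claim to the statement that the scalar $\left(\begin{smallmatrix}\alpha&\beta&\gamma\end{smallmatrix}\right)A(\varepsilon_1)\cdots A(\varepsilon_{\nu-1})\left(\begin{smallmatrix}1&1&1\end{smallmatrix}\right)^T$ is unchanged when the factors are reversed; it then argues by induction on $\nu$, invoking a lemma that records, for each of the eight possible initial triples of letters, a three-term linear recurrence satisfied simultaneously by the row-vector products and their transposes --- sixteen identities checked by direct (admittedly ``tiresome'') computation. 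You instead exploit that $A_0-I$ and $A_1-I$ have rank one, expand the product by inclusion--exclusion over which factors contribute their rank-one part, and reduce the reversal symmetry to an explicit chain identity for words over $\{\tO,\tL\}$, which you settle with a clean run-based cancellation. Your route isolates exactly which structural features of the matrices make the symmetry work (namely that $\hat w_b^\top v_1$ is independent of $b$ and that $\hat w_b^\top\hat v_c=\xi_c-[b=c]$), replaces the unenlightening case check by a transparent combinatorial argument, and as a small bonus handles an arbitrary value of $x_1$, whereas the paper's displayed product formula with right vector $(1,1,1)^T$ implicitly normalizes $x_1=1$. One phrasing nit: in your final step it is the contribution of each $T$ to the \emph{difference} of the two sides that vanishes (either both terms are zero or they are equal), not each $T$-contribution itself; the argument is fine once read this way.
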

This theorem generalizes Theorem~2.1 in the paper~\cite{S2017} by the author, see also Morgenbesser and the author~\cite{MS2012} and the recent paper by the author~\cite{S2017b}.
We obtain the following, somewhat curious, corollary.
\begin{corollary}
\[ND_N(\omega)=N^RD_{N^R}(\omega).\]
\end{corollary}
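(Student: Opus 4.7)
The plan is to apply Theorem~\ref{thm_reflection} to the sequence $(d_n)_{n\ge 0}$, which requires verifying the recurrence
\[d_{2n}=d_n, \qquad d_{2n+1}=\tfrac12 d_n+\tfrac12 d_{n+1}+\tfrac12 \qquad(n\ge 1).\]
This is exactly the hypothesis of Theorem~\ref{thm_reflection} with $\alpha=\beta=\gamma=\tfrac12$; the theorem then gives $d_n=d_{n^R}$ for every $n\ge 1$, and the corollary follows on substituting $d_N=ND_N(\omega)$ and $d_{N^R}=N^R D_{N^R}(\omega)$.

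To derive the recurrence I would use the self-similarity $\omega_{2m}=\omega_m/2$ and $\omega_{2m+1}=\tfrac12+\omega_m/2$. Writing $f_N(t):=A_N(\omega,0,t)-Nt$, the identity $D_N^*=D_N$ of B\'ejian and Faure~\cite{BF1978} reduces $d_N$ to $\sup_{t\in[0,1]}|f_N(t)|$. Splitting this supremum at $t=\tfrac12$ and rescaling each half produces
\[f_{2n}(t)=f_n(2t) \text{ on } [0,\tfrac12], \qquad f_{2n}(t)=f_n(2t-1) \text{ on } [\tfrac12,1],\]
from which $d_{2n}=d_n$ is immediate. For the odd case the extra point $\omega_{2n}=\omega_n/2$ sits in the left half, and an analogous computation yields
\[d_{2n+1}=\max\Bigl\{\sup_{s\in[0,1]}|f_{n+1}(s)+s/2|,\;\sup_{s\in[0,1]}|f_n(s)+(1-s)/2|\Bigr\}.\]

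The main obstacle is to collapse this maximum to $\tfrac12(d_n+d_{n+1}+1)$. A helpful reduction is the pointwise identity
\[f_{n+1}(s)+\tfrac{s}{2} \;=\; f_n(s)+\tfrac{1-s}{2}\;+\;[\omega_n<s]-\tfrac12,\]
which follows from $f_{n+1}(s)-f_n(s)=[\omega_n<s]-s$ and ties the two suprema together: each exceeds the other by at most $\tfrac12$. Combined with the classical fact that, for the Van der Corput sequence, the maxima and minima of $f_N$ lie on the dyadic lattice and can be computed explicitly from the binary expansion of $N$ (implicit in B\'ejian and Faure~\cite{BF1978}), careful bookkeeping of the locations of the argument-maxima collapses the above maximum to the claimed closed form. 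Once the recurrence is in hand, the invocation of Theorem~\ref{thm_reflection} is mechanical.
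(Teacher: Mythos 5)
Your first paragraph is already the paper's entire proof: the recurrence \eqref{eqn_d_recurrence} is recorded in Section~3 as a result of B\'ejian and Faure, and the corollary is Theorem~\ref{thm_reflection} applied to $(d_n)$ with $\alpha=\beta=\gamma=\tfrac12$, followed by the substitution $d_N=ND_N(\omega)$. If you cite \eqref{eqn_d_recurrence} you are done, and your argument coincides with the paper's.

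The remainder of the proposal, which tries to re-derive the recurrence from the self-similarity of $\omega$, contains a genuine gap. The splitting of $f_{2n+1}$ into the two halves is correct, as is the pointwise identity $f_{n+1}(s)+\tfrac s2=f_n(s)+\tfrac{1-s}2+[\omega_n<s]-\tfrac12$; but that identity only shows the two suprema differ by at most $\tfrac12$, and neither supremum individually equals $\tfrac12(d_n+d_{n+1}+1)$ in any evident way. The sentence about ``careful bookkeeping of the locations of the argument-maxima'' is exactly the content of the odd-index recurrence and is asserted rather than proved; as stated, the maximum of the two suprema could a priori be strictly larger or smaller than the claimed average. If you want a self-contained derivation, the explicit formula \eqref{eqn_d_explicit} gives one in two lines: $d_{2N}=\lVert N\rVert+\sum_{j\geq 1}\lVert N/2^j\rVert=d_N$; and since $x\mapsto\lVert x\rVert$ is affine on each interval $[N/2^j,(N+1)/2^j]$ (its break points are the half-integers, which cannot lie in the interior of such an interval for $j\geq 1$), the midpoint value satisfies $\lVert (2N+1)/2^{j+1}\rVert=\tfrac12\bigl(\lVert N/2^j\rVert+\lVert (N+1)/2^j\rVert\bigr)$, whence $d_{2N+1}=\tfrac12+\tfrac12(d_N+d_{N+1})$. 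With the recurrence secured either way, the invocation of Theorem~\ref{thm_reflection} is correct.
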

We note, however, that this digit reversal property seems to be restricted to base $2$. That is, the Van der Corput sequence in base $q$, where $q\geq 3$, does not seem to satisfy an analogous property with respect to digit reversal in base $q$. We refer the reader to~\cite{F1981,F2005,LP2003,P2004} concerning results on the discrepancy and diaphony of digital sequences. Among these one can find explicit formulas for the star discrepancy analogous to~\eqref{eqn_d_explicit}.
For illustration, we list the first values of $d_N=ND_N(\omega)$.
\[\begin{array}{ccccccccccccccccc}
N&0&1&2&3&4&5&6&7&8&9&10&11&12&13&14&15\\
d_N&0&1&1&\frac 32&1&\frac 74&\frac 32&\frac 74&1&\frac{15}{8}&\frac 74&\frac{17}8&\frac 32&\frac{17}8&\frac 74&\frac {15}8\\[5mm]
N&16&17&18&19&20&21&22&23&24&25&26&27&28&29&30&31\\
d_N&1&\frac{31}{16}&\frac {15}8&\frac{37}{16}&\frac 74&\frac{39}{16}&
\frac{17}8&\frac{37}{16}&\frac 32&\frac{37}{16}&\frac {17}8&\frac{39}{16}&\frac 74&\frac{37}{16}&\frac{15}8&\frac{31}{16}
\end{array}
\]
Apart from the identity $d_N=d_{2^k-N}$, which is valid for $2^{k-1}\leq N\leq 2^k$ and which can be shown easily by induction, we see the notable identity
$d_{19}=d_{25}$. Note that $19^R=25$.

The remainder of this paper is dedicated to the proofs of our results.
\section{proofs}
We will use the following explicit formula due to B\'ejian and Faure~\cite{BF1978}.
\begin{equation}\label{eqn_d_explicit}
d_N=\sum_{j=1}^\infty \bigl \lVert N/2^j\bigr \rVert.
\end{equation}
Based on this result B\'ejian and Faure proved that $d_N$ satisfies the following recurrence:
\begin{equation}\label{eqn_d_recurrence}
d_0=0, \quad d_1=1,\quad d_{2N}=d_N, \quad d_{2N+1}=\frac{d_N+d_{N+1}+1}2,
\end{equation}
which is valid for all $N\geq 0$.

We note that $(d_n)_{n\geq 0}$ is a $2$-regular sequence in the sense of Allouche and Shallit~\cite{AS1992}.
Moreover, the recurrence is of the discrete divide-and-conquer type~\cite{DS2013,GH2005}.
\subsection{Proof of Theorems~\ref{thm_strong_irregularity} and~\ref{thm_strong_irregularity2}}
In order to prove these theorems, we state a couple of lemmas.
We let $\lvert N\rvert_{\tO\tL}$ denote the number of occurrences of $\tO\tL$ in the binary expansion of $N$.
\begin{lemma}
We have
\begin{equation}\label{eqn_DiscrBlock}
\frac 12 \lvert N\rvert_{\tO\tL}\leq d_N\leq 2 \lvert N\rvert_{\tO\tL}.
\end{equation}
\end{lemma}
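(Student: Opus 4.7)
The plan is to exploit the explicit formula $d_N = \sum_{j\ge 1} a_j$, where $a_j := \lVert N/2^j\rVert$, and to analyze how $a_j$ evolves as a function of the binary digits $\varepsilon_i(N)$. The key step is to derive a one-step recurrence for $a_j$. Since $N \bmod 2^j$ has binary expansion $\varepsilon_{j-1}\varepsilon_{j-2}\cdots\varepsilon_0$, we have $\{N/2^j\} \ge 1/2$ exactly when $\varepsilon_{j-1}=1$; combining this with the identity $\{N/2^{j+1}\} = \varepsilon_j/2 + \{N/2^j\}/2$, a routine case analysis gives
\[
a_{j+1} = \begin{cases} a_j/2, & \text{if } \varepsilon_j = \varepsilon_{j-1}, \\ 1/2 - a_j/2, & \text{if } \varepsilon_j \ne \varepsilon_{j-1}, \end{cases}
\]
where we use the conventions $\varepsilon_{-1} := 0$ and $a_0 := \lVert N\rVert = 0$. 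Call these \emph{halving} and \emph{reflecting} steps respectively. Each maximal run of $\tL$'s in the binary expansion of $N$ contributes precisely two bit-flips (one $\tO \to \tL$ at its lower end, one $\tL \to \tO$ at its upper end), so the total number of reflecting steps equals $2\lvert N\rvert_{\tO\tL}$.

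For the upper bound, I would partition the sum $\sum_{j\ge 1} a_j$ into blocks delimited by the reflecting steps. The initial segment (before the first reflecting step) consists entirely of halvings starting from $0$, hence contributes $0$. A block beginning immediately after a reflecting step at position $j_0$ is of the form $a_{j_0+1}, a_{j_0+1}/2, a_{j_0+1}/4, \dots$ running until the next reflecting step, and therefore sums to at most $2 a_{j_0+1} \le 2\cdot 1/2 = 1$. Since there are exactly $2\lvert N\rvert_{\tO\tL}$ such blocks, we obtain $d_N \le 2\lvert N\rvert_{\tO\tL}$.

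For the lower bound, observe that immediately after a reflecting step at position $j_0$ we have $a_{j_0+1} = 1/2 - a_{j_0}/2 \ge 1/4$, using $a_{j_0} \le 1/2$. These $2\lvert N\rvert_{\tO\tL}$ reflecting steps produce distinct indices in the sum, and all other terms are nonnegative; hence $d_N \ge 2\lvert N\rvert_{\tO\tL} \cdot 1/4 = \lvert N\rvert_{\tO\tL}/2$. The only non-trivial ingredient is the halve/reflect recurrence; the remainder is bookkeeping, with the geometric-sum bound $\sum_{k\ge 0} 2^{-k} \le 2$ doing all the work in both directions.
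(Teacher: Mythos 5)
Your proof is correct and follows essentially the same route as the paper: both start from the explicit formula $d_N=\sum_{j\ge 1}\lVert N/2^j\rVert$, observe that the terms decay geometrically within each run of equal digits and exceed $1/4$ right after each run boundary, and count the $2\lvert N\rvert_{\tO\tL}$ boundaries. Your halve/reflect recurrence for $a_j$ is a cleaner packaging of the paper's index-by-index bookkeeping, but the underlying argument is the same.
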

\begin{proof}
We use the formula $d_N=\sum_{j=1}^\infty \left\lVert \frac N{2^j}\right\rVert$.
Assume that $m=\lvert N\rvert_{\tO\tL}$.
For $0\leq k<m$ let $a_k$ be the index corresponding to the beginning of the $k$-th block of $\tL$s, and $b_k$ be the index corresponding to the end.
We prove the first inequality first.
We have
\[\sum_{j\geq a_0+2}\left\lVert N/2^{j}\right\rVert
=\sum_{j\geq 0}\left\lvert N/2^{a_0+2+j}\right\rvert
\geq\sum_{j\geq 0}\left\lvert 1/2^{2+j}\right\rvert=1/2,\]
moreover for $0\leq k<m-1$
\[\left\lVert N/2^{b_k+1}\right\rVert
\geq \left\lVert 1/2+1/8+1/16+\cdots\right\rVert
= 1/4\]
and for $1\leq k<m$
\[\left\lVert N/2^{a_k+2}\right\rVert
\geq \left\lVert 1/4\right\rVert=1/4.\]
To conclude the proof of the first inequality, we note that the indices $b_k+1$ and $a_k+2$ are pairwise different.

As for the second inequality,
we bound the contribution of each block of $\tL$s by $2$ as follows.
For simplicity of the argument, we set $b_{-1}=\infty$.
We have
\begin{align*}
d_N&=\sum_{j=1}^\infty \left\lVert \frac N{2^j}\right\rVert
=
\sum_{-1\leq k<m-1}
\left(
\sum_{j=a_{k+1}+2}^{b_k}
\left\lVert \frac N{2^j}\right\rVert
+
\sum_{j=b_{k+1}+1}^{a_{k+1}+1}
\left\lVert \frac N{2^j}\right\rVert
\right).
\end{align*}
The summands are bounded above by geometric series with quotient $q=1/2$, which yields the second inequality.
\end{proof}
We note that the constant $2$ is optimal, which can be seen by considering integers having the binary expansion $(\tO^s\tL^s)^k$ and letting $s\rightarrow\infty$.
The constant $1/2$ probably can be improved, but not beyond $2/3$, which follows by considering integers of the form $(\tO\tL)^k$ and letting $k\rightarrow\infty$.
The next lemma is concerned with counting occurrences of $\tO\tL$ in the binary expansion.
\begin{lemma}\label{lem_blockcount}
For $k,\ell\geq 0$ set
\[a_{k,\ell}=\left\lvert\left\{n\in[2^k,2^{k+1}):\lvert n\rvert_{\tO\tL}=\ell\right\}\right\rvert.\]
Then
\[a_{k,\ell}={k+1 \choose 2\ell-1}\]
\end{lemma}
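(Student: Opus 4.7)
The approach is a direct combinatorial enumeration. First I would make explicit the interpretation of $\lvert n\rvert_{\tO\tL}$ implicit in the previous lemma: it equals the number of maximal runs of $\tL$'s in the binary expansion of $n$. This is the natural reading if one regards the binary expansion as padded by leading $\tO$'s, since every maximal $\tL$-run is then preceded by exactly one $\tO\tL$ transition. Consequently, for $n \in [2^k, 2^{k+1})$, whose expansion has exactly $k+1$ digits and begins with $\tL$, the quantity $a_{k,\ell}$ counts binary strings of length $k+1$ starting with $\tL$ and having exactly $\ell$ maximal $\tL$-runs.

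The plan is to split according to the last digit. Any such string decomposes uniquely into alternating runs beginning with $\tL$, so it has one of the two forms
\[
\tL^{p_1}\tO^{q_1}\tL^{p_2}\tO^{q_2}\cdots \tL^{p_\ell}\qquad\text{or}\qquad \tL^{p_1}\tO^{q_1}\cdots \tL^{p_\ell}\tO^{q_\ell},
\]
with every exponent $\geq 1$, according to whether the string ends in $\tL$ or $\tO$. These correspond to compositions of $k+1$ into $2\ell-1$ and $2\ell$ positive parts respectively, enumerated by $\binom{k}{2\ell-2}$ and $\binom{k}{2\ell-1}$ via stars and bars.

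Summing the two contributions and applying Pascal's rule yields
\[
a_{k,\ell}=\binom{k}{2\ell-2}+\binom{k}{2\ell-1}=\binom{k+1}{2\ell-1},
\]
which is the claimed identity. There is no serious obstacle here: the argument is a routine enumeration. The only real pitfall is to identify $\lvert n\rvert_{\tO\tL}$ correctly as a block count, because a naive interpretation as a literal count of $\tO\tL$ substrings in the unpadded expansion would fail to match even the previous lemma (e.g.\ for $n=2$).
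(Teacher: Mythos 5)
Your proof is correct, including the essential clarification that $\lvert n\rvert_{\tO\tL}$ must be read as the number of maximal blocks of $\tL$s (equivalently, occurrences of $\tO\tL$ in the expansion padded with a leading $\tO$), which is indeed the interpretation the paper uses. The paper's proof is the same routine run-length enumeration, packaged as a single bijection onto the $(2\ell-1)$-element subsets of $\{0,\ldots,k\}$ (marking the rightmost position of each $\tL$-block and of each of the first $\ell-1$ $\tO$-blocks) instead of your two-case composition count $\binom{k}{2\ell-2}+\binom{k}{2\ell-1}$ followed by Pascal's rule, so the two arguments are essentially identical.
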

\begin{proof}
We are interested in the set $\mathcal A$ of integers                 
$n\in[2^k,2^{k+1})$                                                   
having exactly $\ell$ blocks of consecutive $\tL$s.
We define a bijection $\varphi$ from $\mathcal A$ onto the set of $2\ell-1$-element subsets of $\{0,\ldots,k\}$ as follows.
The binary expansion $\varepsilon_k\cdots \varepsilon_0$ of $n$
consists of $\ell$ blocks of consecutive $\tL$s and $\ell-1$ or $\ell$ blocks of consecutive $\tO$s.
Let $\varphi(n)$ consist of those indices $i\in\{0,\ldots,k\}$ corresponding to the rightmost element of a block of $\tL$s or to the rightmost element of one of the first $\ell-1$ blocks of $\tO$s.
It is clear how to construct the inverse function.
\end{proof}

We are interested in the quantity  
\[A_{k,\varepsilon}=\left\lvert\left\{N\in[2^k,2^{k+1}):d_N\leq \varepsilon \log N\right\}\right\rvert.\]   
By~\eqref{eqn_DiscrBlock} and Lemma~\ref{lem_blockcount}
we have
\begin{equation}\label{eqn_A_estimate}
\begin{aligned}
A_{k,\varepsilon}&\leq
\left\lvert\left\{N\in[2^k,2^{k+1}):\lvert N\rvert_{\tO\tL}\leq 2\varepsilon \log 2^{k+1}\right\}\right\rvert
\\&=\sum_{\ell=0}^{2\varepsilon(k+1)\log 2}a_{k,\ell}
=\sum_{\ell=0}^{2\varepsilon(k+1)\log 2}{k+1\choose 2\ell-1}
\end{aligned}
\end{equation}
and
\begin{equation}\label{eqn_A_estimate2}
\begin{aligned}
A_{k,\varepsilon}&\geq 
\left\lvert\left\{N\in[2^k,2^{k+1}):\lvert N\rvert_{\tO\tL}\leq (\varepsilon/2) \log 2^k\right\}\right\rvert
\\&=\sum_{\ell=0}^{(\varepsilon/2) k\log 2}a_{k,\ell}
=\sum_{\ell=0}^{(\varepsilon/2)k\log 2}{k+1\choose 2\ell-1}.
\end{aligned}
\end{equation}
We are therefore interested in large deviations of the binomial distribution.
To this end, we state the following lemma.
\begin{lemma}\label{lem_BinomEstimate}
Assume that $k,\ell \geq 1$ are integers,  
$\alpha,\beta\in(0,1/e]$ real numbers, where $e=2.71828\ldots$, and $\alpha k\leq \ell\leq \beta k$.  
Then
\[\frac 1{3\sqrt{k}}\left(\frac{\alpha^{-\alpha}}{(1-\alpha)^{1-\beta}}\right)^k\leq
{k\choose \ell}\leq \left(\frac {\beta^{-\beta}}{(1-\beta)^{1-\alpha}}\right)^k.\]
\end{lemma}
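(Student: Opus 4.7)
The plan is to analyse $\binom{k}{\ell}$ through the entropy function $H(\rho) = -\rho\ln\rho - (1-\rho)\ln(1-\rho)$, with $\rho = \ell/k \in [\alpha,\beta]$, and to exploit the hypothesis $\beta \leq 1/e$ via two monotonicity facts. First, $-x\ln x$ is strictly increasing on $(0,1/e]$, with its unique maximum at $1/e$; second, $-\ln(1-x)$ is increasing on $[0,1)$. Together these let me bracket $H(\rho)$ between expressions involving $\alpha$ and $\beta$ that match exactly the right-hand sides in the two claimed inequalities.

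For the upper bound I would start from the trivial probabilistic estimate $\binom{k}{\ell} \leq \rho^{-\rho k}(1-\rho)^{-(1-\rho)k} = e^{kH(\rho)}$, obtained by isolating one term in the expansion $1 = (\rho+(1-\rho))^k$. The bound on $H(\rho)$ then splits into two separate pieces: $-\rho\ln\rho \leq -\beta\ln\beta$ (since $\rho\leq\beta\leq 1/e$ and $-x\ln x$ is increasing there), and $(1-\rho)\cdot(-\ln(1-\rho)) \leq (1-\alpha)\cdot(-\ln(1-\beta))$ (by multiplying the two componentwise inequalities $1-\rho\leq 1-\alpha$ and $-\ln(1-\rho)\leq -\ln(1-\beta)$, both factors being nonnegative). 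Summing yields the claimed upper bound.

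For the lower bound the explicit prefactor $1/(3\sqrt{k})$ forces a genuine use of Stirling. I would invoke Robbins' explicit bounds $\sqrt{2\pi n}(n/e)^n \leq n! \leq \sqrt{2\pi n}(n/e)^n e^{1/(12n)}$ to obtain
\[\binom{k}{\ell} \geq \frac{1}{\sqrt{2\pi k\rho(1-\rho)}}\, e^{kH(\rho)}\, e^{-1/(12\ell)-1/(12(k-\ell))}.\]
Using $\rho(1-\rho) \leq (1-1/e)/e$ on $(0,1/e]$, the Stirling prefactor is at least $e/\sqrt{2\pi(e-1)\,k} \approx 0.82/\sqrt{k}$, while $\ell\geq 1$ and $k-\ell \geq k(1-1/e) \geq 1$ (which holds under the hypotheses once $k\geq 3$; smaller $k$ being vacuous) show that the exponential error factor is at least $e^{-1/6}$. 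Their product comfortably exceeds $1/3$. It remains to bound $H(\rho)$ from below by the reverse monotonicity argument: $-\rho\ln\rho \geq -\alpha\ln\alpha$ (from $\rho\geq\alpha$) and $(1-\rho)(-\ln(1-\rho)) \geq (1-\beta)(-\ln(1-\alpha))$ (from $1-\rho\geq 1-\beta$ and $-\ln(1-\rho)\geq -\ln(1-\alpha)$, both factors nonnegative).

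The main obstacle is the numerical bookkeeping in the lower bound: one must track the Stirling prefactor and the Robbins error term carefully enough to see that their product stays above $1/3$ uniformly in $\rho\in[\alpha,\beta]$ and $k$. The entropy manipulations themselves are elementary once the monotonicity ranges have been identified, which is precisely why the hypothesis $\alpha,\beta\in(0,1/e]$ is the natural one.
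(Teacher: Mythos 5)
Your proof is correct and follows essentially the same route as the paper: both arguments reduce ${k\choose \ell}$ to $e^{kH(\ell/k)}$ up to explicit prefactors and then use the monotonicity of $-x\ln x$ on $(0,1/e]$ (equivalently, of $\ell\mapsto(k/\ell)^\ell$ on $[1,k/e]$) together with the monotonicity of $-\ln(1-x)$ to replace $\ell/k$ by the appropriate endpoint in each factor. The only differences are cosmetic: you derive the upper bound from the binomial theorem rather than from Robbins' form of Stirling, and you carry out explicitly the lower-bound bookkeeping (prefactor at least $\frac{e}{\sqrt{2\pi(e-1)k}}\,e^{-1/6}>\frac1{3\sqrt k}$, with $\ell\geq1$ and $k-\ell\geq1$ automatic whenever the hypotheses are non-vacuous) that the paper dismisses as ``similar''.
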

\begin{proof}
For all $n\geq 1$, we have the estimate (see Robbins~\cite{R1955})
\[\sqrt{2\pi}n^{n+1/2}e^{-n}\leq n!\leq e n^{n+1/2}e^{-n}.\]
Noting that the maximum of $\ell\mapsto \ell^{-1/2}(k-\ell)^{-1/2}$ for $1\leq \ell\leq k\alpha$ is attained at $\ell=1$, it follows that
\[{k\choose \ell}\leq \frac{\sqrt{k}}{\sqrt{\ell}\sqrt{k-\ell}}
\frac{e}{2\pi}
\frac{k^k}{\ell^\ell(k-\ell)^{k-\ell}}
\leq
\frac{k^k}{\ell^\ell(k-\ell)^{k-\ell}}
=\left(\frac{k}{\ell}\right)^\ell\left(1+\frac{\ell}{k-\ell}\right)^{k-\ell}
.\]
Since $\ell\mapsto (k/\ell)^\ell$ is increasing for $\ell\in[1,n/e]$, it follows that
\[{k\choose \ell}\leq \left(\frac{k}{k\beta}\right)^{k\beta}\left(1+\frac{k\beta}{k(1-\beta)}\right)^{k(1-\alpha)}.\]
This implies the second inequality.
The proof of the first inequality is similar.
\end{proof}
In order to prove Theorems~\ref{thm_strong_irregularity} and~\ref{thm_strong_irregularity2}, we combine Lemma~\ref{lem_BinomEstimate} and the estimates~\eqref{eqn_A_estimate},~\eqref{eqn_A_estimate2}.
From~\eqref{eqn_A_estimate} and the lemma, we obtain for large $k$, $\beta=4\log 2/100$ and $\alpha=\beta-\delta$, where $\delta>0$ is small,
\begin{align*}
A_{k,1/100}
&\leq
\bigl(2/100(k+1)\log 2+1\bigr)
\max_{\ell\leq \frac 2{100}(k+1)\log 2}{k+1\choose 2\ell-1}
\\&\leq
\bigl(2/100(k+1)\log 2+1\bigr)
\bigl(\beta^{-\beta}/(1-\beta)^{1-\alpha}\bigr)^{k+1}
\end{align*}
for all $k\geq 1$.
Since \[\bigl \lvert\bigl\{n<N:d_n\leq \log n/100\bigr\}\bigr \rvert
\leq A_{0,1/100}+\cdots+A_{L,1/100},\]
where $2^L\leq N<2^{L+1}$, we easily obtain the first theorem by noting that
$\log\bigl(\beta^{-\beta}/(1-\beta)^{1-\beta}\bigr)/\log(2)<0.183$.

To prove Theorem~\ref{thm_strong_irregularity2}, 
we note that for large $k$ we obtain from~\eqref{eqn_A_estimate2} and Lemma~\ref{lem_BinomEstimate}, setting $\beta=\log 2/100$ and $\alpha=\beta-\delta$,
\[A_{k,1/100}\geq {k+1\choose 2\lfloor (1/200)k\log 2\rfloor-1}\geq \frac 1{3\sqrt{k+1}}\left(\frac{\alpha^{-\alpha}}{(1-\alpha)^{1-\beta}}\right)^k.
\]
This implies the statement of Theorem~\ref{thm_strong_irregularity2},
noting that
$\log\bigl(\beta^{-\beta}/(1-\beta)^{1-\beta}\bigr)/\log(2)>0.056$.
\subsection{Proof of Theorem~\ref{thm_one_periodic}}
We define $S'(N)=S(N)-d_N/2 = d_1+\cdots+d_{N-1}+d_N/2$.
By splitting the sum into even and odd indices and using the recurrence~\eqref{eqn_d_recurrence}, we obtain
\begin{equation}\label{eqn_S_invariance}
\begin{aligned}
S'(2N)&=\sum_{k=1}^{N-1}d_{2k}+\sum_{k=0}^{N-1}d_{2k+1}+\frac{d_{2N}}2
=S'(N)+\frac 12\sum_{k=0}^{N-1}(d_k+d_{k+1}+1)
\\&=S'(N)+\frac 12\sum_{k=1}^{N-1}d_k+\frac 12\sum_{k=1}^{N}d_k+\frac N2
=2S'(N)+\frac N2.
\end{aligned}
\end{equation}

Define
\[R(N)=\frac 1NS'(N)-\frac 14\log_2 N.\]
By a simple calculation using~\eqref{eqn_S_invariance} we obtain
\begin{equation}\label{eqn_R_invariance}
R(2N)=R(N).
\end{equation}
We may therefore define a $1$-periodic function $\psi$
defined on the set $\{\log_2 N:N\in\mathbb N\}+\mathbb Z$ as follows:
if $x=\log_2 N+\ell$, where $N$ is odd and $\ell\in\mathbb Z$, we set $\psi(x)=R(N)$. 
Using the identity $R(2N)=R(N)$, it is easy to see that this is well-defined,
moreover~\eqref{eqn_one_periodic} holds.

We need to show that $\psi$ has a continuous continuation to $\mathbb R$.
Since the points $\{\log_2 N\}$ are dense in 
$[0,1)$                                      
such a continuation is necessarily unique.

We define auxiliary functions $F_k:[2^{k-1},2^k]\rightarrow\mathbb R$ by $F_k(x)=S'(\lfloor x\rfloor)$.
Note that by Theorem~A the maximal height of a jump of $F_k$ is $k/3+O(1)$.
We define                     
$\psi_k:[0,1)\rightarrow\mathbb R$  
in such a way that
\[\psi_k(\{\log_2(x)\})=\frac 1xF_k(x)-\frac 14\log_2 x
\quad\textrm{for}\quad 2^{k-1}\leq x<2^k.\]
Moreover, set $\psi_k(1)=F_k(2^k)/2^k-k/4$.
We have $\psi_k(0)=\psi_k(1)=1/2$ by~\eqref{eqn_R_invariance}.
Note that each  
$z\in [0,1)$    
is hit exactly once by the function $\{\log_2(x)\}$, therefore $\psi_k$ is uniquely determined.
Moreover the height of the jumps of $\psi_k:[0,1]\rightarrow \mathbb R$ is bounded by $O(k/2^k)$.
We first show pointwise convergence of the sequence $(\psi_k)_k$.
The statement is clear for $z=\{\log_2 N\}$ and also for $z=1$. 
Assume that $z\in[0,1)$ is not of this form.
Choose, for each $k\geq 1$,                                                
\[N_k=\max\bigl\{N\in[2^{k-1},2^k):\left\{\log_2 N_k\right\}\leq z\bigr\}.\]   
We consider the sequence of values $\psi_k(\{\log_2 N_k\})$.
Note that $N_{k+1}\in\{2N_k,2N_k+1\}$.
Trivially, we have $\lvert \psi_{k+1}(\{\log_2(2N_k)\})-\psi_k(\{\log_2 N_k\})\rvert=0$.
By~\eqref{eqn_S_invariance} we have
\begin{align*}
&\lvert \psi_{k+1}(\{\log_2(2N_k+1)\})-\psi_k(\{\log_2 N_k\})\rvert
\\&=\frac 1{2N_k+1}S'(2N_k+1)-\frac 14\log_2(2N_k+1)
-\left(\frac 1{N_k}S'(N_k)-\frac 14\log_2 N_k\right)
\\&=\frac 1{2N_k}\left(2S'(N_k)+\frac {N_k}2+\frac {d_{2N_k}+d_{2N_k+1}}2\right)
+\left(\frac 1{2N_k+1}-\frac 1{2N_k}\right)S'(2N_k+1)
\\&+\frac 14\bigl(\log_2 (2N_k)-\log_2(2N_k+1)\bigr)
-\frac 14\bigl(\log_2(2N_k)-\log_2 N_k\bigr)-\frac{S'(N_k)}{N_k}
\\&=\frac {d_{2N_k}+d_{2N_k+1}}{4N_k}-\frac{S'(2N_k+1)}{2N_k(2N_k+1)}+
\frac 14\bigl(\log_2(2N_k)-\log_2(2N_k+1)\bigr)
.
\end{align*}
Using the estimate $S'(N_k)=O(N_k\log(N_k))$, which follows from Theorem~A, we obtain
\begin{equation}\label{eqn_consecutive}
\bigl \lvert \psi_{k+1}(\{\log_2 N_{k+1}\})-\psi_k(\{\log_2 N_k\})\bigr \rvert
\leq
C'\frac{\log N_k}{N_k}
\leq C\frac{k}{2^k},
\end{equation}
where the constant $C$ is independent of $z$.

Moreover, let   
$x\in [2^{k-1},2^k)$ 
be such that $z=\{\log_2 x\}$.
Note that $N_k<x<N_k+1$.
We have
\begin{equation}\label{eqn_between}
\begin{aligned}
\bigl \lvert \psi_k(\{\log_2 x\})-\psi_k(\{\log_2 N_k\})\bigr \rvert
&\leq \left\lvert \frac 1xS'(\lfloor x\rfloor)-\frac 1{N_k}S'(N_k) \right\rvert
+\frac 14 \bigl \lvert \log_2 x - \log_2 N_k\bigr \rvert
\\&\leq S'(N_k)\left(\frac 1{N_k}-\frac 1x\right)+\frac 1{4N_k}
\leq C''\frac {\log N_k}{N_k}\leq C\frac{k}{2^k},
\end{aligned}
\end{equation}
where the constant $C$, without loss of generality, is the same as in~\eqref{eqn_consecutive}.

We define $K_\ell=C\sum_{i\geq \ell}\frac{i}{2^i}$.
We note that $K_\ell\rightarrow 0$ as $\ell\rightarrow\infty$.
Let $I_k$ be the symmetric interval of length $2K_k$ around $\psi_k (\{\log_2 N_k\})$.
By~\eqref{eqn_consecutive} and the triangle inequality we have $I_{k+1}\subseteq I_k$, moreover~\eqref{eqn_between} implies $\psi_k(z)\in I_k$.
By the nested intervals theorem the sequence $(\psi_k)_{k\geq 1}$ converges pointwise to a function that we call $\psi$.
Since both of $\psi_k(z)$ and $\psi(z)$ lie in the interval $\psi_k(\{\log_2 N_k\})\pm K_k=I_k$, the number $\psi_k(z)$ lies in the interval $\psi(z)\pm 2K_k$ for all 
$z\in[0,1)$ 
and $k\geq 1$, therefore the sequence $(\psi_k)_{k\geq 1}$ of functions converges uniformly to $\psi$.
We need to show continuity of $\psi$.
Let $z\in[0,1]$ and assume that $\varepsilon>0$.
Choose $k$ so large that the height of the jumps of $\psi_k$ is bounded by $\varepsilon/3$ and also such that $\sup_{0\leq y\leq 1}\lvert \psi(y)-\psi_k(y)\rvert<\varepsilon/3$.
Let $\delta$ be so small that $\psi_k$ has at most one jump in the interval $[z-\delta,z+\delta]\cap[0,1]$.
Application of the triangle inequality and noting that $\psi_k$ is nonincreasing between the jumps finishes the proof of continuity.
Moreover, we have $\psi_k(0)=\psi_k(1)=1/2$, therefore the continuation to $\mathbb R$ is continuous. 
\qed

\begin{remark}
We note that similar reasoning can be applied to Stern's diatomic sequence defined by $s_1=1$, $s_{2n}=s_n$ and $s_{2n+1}=s_n+s_{n+1}$ for $n\geq 1$.
The partial sums $S'(N)=s_1+\cdots+s_{N-1}+s_N/2$ satisfy $S'(2N)=3S'(N)$,
moreover the maximum of $s_n$ on dyadic intervals  
$[2^k,2^{k+1})$                                    
is $F_{k+2}$, where $F_k$ is the $k$-th Fibonacci number (see Lehmer~\cite{L1929} and Lind~\cite{L1969}).
We obtain a representation of the partial sums $S(N)=s_1+\cdots+s_N$:
\[
S_N=N^{\log_23}\psi(\log_2 N)+\frac{s_N}2,
\]
where $\psi$ is continuous and $1$-periodic.
\end{remark}
\subsection{Proof of Theorem~\ref{thm_reflection}}
The proof is an adaption of the proof of~\cite[Theorem~2.1]{S2017}.
The central property that we will need in our proof is given by the following lemma.
\begin{lemma}\label{lem_reflection_matrices}
Let
\[
  A=\left(\begin{matrix}1&0&0\\\alpha&\beta&\gamma\\0&0&1\end{matrix}\right),
\qquad
  B=\left(\begin{matrix}\alpha&\beta&\gamma\\0&1&0\\0&0&1\end{matrix}\right),
\]
  \[v=\left(\begin{matrix}\alpha&\beta&\gamma\end{matrix}\right)
    \quad\textrm{and}\quad
  w=\left(\begin{matrix}1&1&1\end{matrix}\right)^T.
    \]
Then the following identities for $1\times 3$-matrices hold.
\begin{equation*}
\begin{array}{l@{}l@{\,}r@{\,}r@{\,}l@{\,}l@{\,}l@{\,}r@{\,}l@{\,}l@{\,}r@{\,}r@{\,}l}
  v&AAA&=&  0\cdot&v&AA  &+&  (\beta^2+\beta+1)&v&A  &+&  (-\beta^2-\beta)&v,\\[1mm]
  w^T&A^TA^TA^T&=&  0\cdot&w^T&A^TA^T  &+&  (\beta^2+\beta+1)&w^T&A^T&+& (-\beta^2-\beta)&w^T,\\[4mm]
  v&AAB&=&  (\beta+1)&v&AB  &+&  (-\beta)&v&B  &+&  0\cdot&v,  \\[1mm]
  w^T&A^TA^TB^T&=&  (\beta+1)&w^T&A^TB^T  &+&  (-\beta)&w^T&B^T  &+& 0\cdot&w^T,\\[4mm]
  v&ABA&=&  (\beta+1)&v&BA  &+&  0\cdot&v&A  &+&  (-\beta)&v,\\[1mm]
  w^T&A^TB^TA^T&=&  (\beta+1)&w^T&B^TA^T  &+&  0\cdot&w^T&A^T  &+& (-\beta)&w^T,\\[4mm]
  v&ABB&=&  (\alpha+1)&v&AB  &+&  (-\alpha)&v&A  &+&  0\cdot&v,\\[1mm]
  w^T&A^TB^TB^T&=&  (\alpha+1)&w^T&A^TB^T  &+&  (-\alpha)&w^T&A^T  &+& 0\cdot&w^T,\\[4mm]
  v&BAA&=&  (\beta+1)&v&BA  &+&  (-\beta)&v&B  &+&  0\cdot&v,\\[1mm]
  w^T&B^TA^TA^T&=&  (\beta+1)&w^T&B^TA^T  &+&  (-\beta)&w^T&B^T  &+& 0\cdot&w^T,\\[4mm]
  v&BAB&=&  (\alpha+1)&v&AB  &+&  0\cdot&v&B  &+&  -\alpha&v,\\[1mm]
  w^T&B^TA^TB^T&=&  (\alpha+1)&w^T&A^TB^T  &+&  0\cdot&w^T&B^T  &+&  (-\alpha)&w^T,\\[4mm]
  v&BBA&=&  (\alpha+1)&v&BA  &+&  (-\alpha)&v&A  &+&  0\cdot&v,\\[1mm]
  w^T&B^TB^TA^T&=&  (\alpha+1)&w^T&B^TA^T  &+&  (-\alpha)&w^T&A^T  &+&  0\cdot&w^T,\\[4mm]
  v&BBB&=&  0\cdot&v&BB  &+&  (\alpha^2+\alpha+1)&v&B  &+&  (-\alpha^2-\alpha)&v,\\[1mm]
  w^T&B^TB^TB^T&=&  0\cdot&w^T&B^TB^T  &+&  (\alpha^2+\alpha+1)&w^T&B^T  &+&  (-\alpha^2-\alpha)&w^T.
\end{array}
\end{equation*}
\end{lemma}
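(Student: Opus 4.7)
The plan is to derive all sixteen identities from the two minimal-polynomial relations $A^2=(\beta+1)A-\beta I$ and $B^2=(\alpha+1)B-\alpha I$, together with two ad hoc reductions for the products $ABA$ and $BAB$.

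I would begin by verifying $(A-I)(A-\beta I)=0$ and $(B-I)(B-\alpha I)=0$ by direct matrix computation, which is short because each of $A-I$ and $B-I$ has only one nonzero row. Cubing these relations gives $A^3=(\beta^2+\beta+1)A-(\beta^2+\beta)I$ and $B^3=(\alpha^2+\alpha+1)B-(\alpha^2+\alpha)I$. The six identities for the products $AAA$, $BBB$, $AAB$, $ABB$, $BAA$, $BBA$ in both versions then follow by left-multiplying the appropriate matrix identity by $v$ or by $w^T$; for the $w^T$-versions one uses that $A^T$ and $B^T$ satisfy the same minimal polynomials.

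The products $ABA$ and $BAB$ do not reduce through the minimal polynomial of a single matrix. For the $v$-version of $ABA$ I would use that $v$ is the second row of $A$ (so $v=e_2^T A$, with $e_2$ the second standard basis column) and that the second row of $B$ is $e_2^T$ (so $e_2^T B=e_2^T$). Combined with the minimal polynomial of $A$,
\[vABA=e_2^T A^2\,BA=e_2^T\bigl((\beta+1)A-\beta I\bigr)BA=(\beta+1)vBA-\beta\,e_2^T BA=(\beta+1)vBA-\beta v,\]
using $e_2^T BA=e_2^T A=v$ at the last step. The $v$-version of $BAB$ is symmetric, using $v=e_1^T B$ and $e_1^T A=e_1^T$.

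I expect the main obstacle to be the $w^T$-versions of $ABA$ and $BAB$, because $ABA-(\beta+1)AB+\beta I$ does not vanish---only $w$ lies in its right kernel. After transposition, the identity for $ABA$ amounts to $AB(A-(\beta+1)I)w+\beta w=0$. Since $A(A-(\beta+1)I)=-\beta I$ from the minimal polynomial, it suffices to show that $(A-(\beta+1)I)w$ is fixed by $B$, i.e.\ $(B-I)(A-(\beta+1)I)w=0$. Using the rank-one factorization $B-I=e_1 u$, with $e_1$ a column and $u=(\alpha-1,\beta,\gamma)$ the unique nonzero row of $B-I$, this reduces to the scalar identity $u\cdot(A-(\beta+1)I)w=0$, which is immediate upon expansion. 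The $w^T$-version of $BAB$ is handled symmetrically via the analogous factorization $A-I=e_2(\alpha,\beta-1,\gamma)$.
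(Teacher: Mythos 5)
Your proposal is correct, and it is genuinely more than what the paper offers: the paper simply declares the sixteen identities ``too trivial and tiresome to reproduce,'' i.e.\ it relies on brute-force entrywise verification. Your route is more structured and explains where the coefficients come from. The factorizations $(A-I)(A-\beta I)=0$ and $(B-I)(B-\alpha I)=0$ (equivalently, $A$ has eigenvalues $1,\beta$ and $B$ has eigenvalues $1,\alpha$ on the relevant invariant structure) immediately yield $A^2=(\beta+1)A-\beta I$, $B^2=(\alpha+1)B-\alpha I$, and hence the twelve identities for $AAA$, $BBB$, $AAB$, $ABB$, $BAA$, $BBA$ and their transposes as \emph{full matrix} identities, with $v$ and $w$ playing no role. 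Your analysis also isolates the only genuinely non-generic cases, $ABA$ and $BAB$, where the identity fails at the matrix level and the specific vectors matter: for the $v$-versions you use $v=e_2^TA=e_1^TB$ together with $e_2^TB=e_2^T$ and $e_1^TA=e_1^T$, and for the $w$-versions you reduce, via the rank-one factorizations of $B-I$ and $A-I$, to the scalar identities $(\alpha-1)(-\beta)+\beta(\alpha-1+\gamma)+\gamma(-\beta)=0$ and $\alpha(\beta+\gamma-1)+(\beta-1)(-\alpha)+\gamma(-\alpha)=0$, both of which check out. I verified all the computations; every claimed identity matches the lemma. What your approach buys is transparency (the coefficients $\beta+1,-\beta$ etc.\ are seen to be the elementary symmetric functions of the eigenvalues) and economy (sixteen $1\times3$ computations collapse to two minimal polynomials and two scalar orthogonality checks); what the paper's approach buys is only that it requires no idea at all.
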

The proof is too trivial and tiresome to reproduce here.\qed

The proof of Theorem~\ref{thm_reflection} is by induction.
Set
$A(\tO)=\left(\begin{smallmatrix}1&0&0\\\alpha&\beta&\gamma\\0&0&1\end{smallmatrix}\right)$ and
$A(\tL)=\left(\begin{smallmatrix}\alpha&\beta&\gamma\\0&1&0\\0&0&1\end{smallmatrix}\right)$.
  As in~\cite{S2017}, we have for odd $n\geq 3$ such that $n=(\varepsilon_\nu\cdots \varepsilon_0)_2$,
\begin{equation}\label{eqn_transition_matrices}
x_n
=
  \left(\begin{matrix}\alpha&\beta&\gamma\end{matrix}\right)
A(\varepsilon_1)\cdots A(\varepsilon_{\nu-1})
  \left(\begin{matrix}1&1&1\end{matrix}\right)^T
\end{equation}
and the statement of the theorem is equivalent to the assertion that
\begin{equation}\label{eqn_reflection_matrix_product}
  \left(\begin{matrix}\alpha&\beta&\gamma\end{matrix}\right)
A(\varepsilon_1)\cdots A(\varepsilon_{\nu-1})
  \left(\begin{matrix}1&1&1\end{matrix}\right)^T
=
  \left(\begin{matrix}1&1&1\end{matrix}\right)
A(\varepsilon_1)^T\cdots A(\varepsilon_{\nu-1})^T
  \left(\begin{matrix}\alpha&\beta&\gamma\end{matrix}\right)^T
\end{equation}
for all $\nu\geq 1$ and all finite sequences $(\varepsilon_1,\ldots,\varepsilon_{\nu-1})$ in $\{\tO,\tL\}$.
This can be checked for $\nu\leq 3$ by simple calculation.
Let therefore $\nu\geq 4$.
Assume that $\varepsilon_1\varepsilon_2\varepsilon_3=\tO\tO\tO$.
We consider the first pair of identities in Lemma~\ref{lem_reflection_matrices}.
We multiply the first of these equations by $A(\varepsilon_4)\cdots A(\varepsilon_{\nu-1})w$ from the right and the second one by $A(\varepsilon_4)^T\cdots A(\varepsilon_{\nu-1})^Tv^T$, also from the right.
Then the left hand sides give the two constituents of~\eqref{eqn_reflection_matrix_product}, and the right hand sides are equal by the induction hypothesis.
The other $7$ cases are analogous, and the proof Theorem~\ref{thm_reflection} is complete. \qed
\bibliographystyle{siam}
\bibliography{Schnabeltier}
\end{document}